\newtheorem{theorem}{Theorem}[section]
\newtheorem{lemma}[theorem]{Lemma}
\theoremstyle{definition}
\theoremstyle{remark}
\newtheorem{remark}[theorem]{Remark}
\numberwithin{equation}{section}
\begin{document}

\title{A monotonicity property of a new Bernstein type operator}

\author{Floren\c{t}a Trip\c{s}a}
\email{florentatripsa@yahoo.com}
\address{Faculty of Mathematics and Computer Science, Transilvania University of Bra\c{s}ov, Str. Iuliu Maniu 50, Bra\c{s}ov -- 500091, Romania.}

\author{Nicolae R. Pascu}
\address{Department of Mathematics, Kennesaw State University, 1100 S. Marietta Parkway, Marietta, GA 30060-2896, U.S.A.}
\email{npascu@kennesaw.edu}

\begin{abstract} 
In the present paper we prove that the probabilities of the P\'{o}lya urn distribution (with negative replacement) satisfy a monotonicity property similar to that of the binomial distribution (P\'{o}lya urn distribution with no replacement).

As a consequence, we show that the random variables with P\'{o}lya urn distribution (with negative replacement) are stochastically ordered with respect to the parameter giving the initial distribution of the urn. An equivalent formulation of this result shows that the new Bernstein operator recently introduced in \cite{PaPaTr} is a monotone operator.

The proofs are probabilistic in spirit and rely on various inequalities, some of which are of independent interest (e.g. a refined version of the reversed Cauchy-Bunyakovsky-Schwarz inequality).
\end{abstract}

\subjclass[2008]{Primary 41A36, 41A25, 41A20.}

\keywords{P\'{o}lya urn distribution, Bernstein operator, monotone operator.}

\maketitle

\section{Introduction}
P\'{o}lya urn model (also known as Pólya-Eggenberger urn model) is an experiment in which one observes the number of white balls extracted from an urn containing initially $a$ white balls and $b$ black balls, when the extracted ball is replaced in the urn together with $c$ balls of the same color before the next extraction from the urn.

Denoting by $X_n^{a,b,c}$ the random variable representing the number of white balls obtained in $n\ge1$ extractions from the urn, it can be shown (e.g. \cite{Kotz}) that the model is well defined (defines a distribution) for $a,b>0$ and $c\in\mathbb{R}$ satisfying $c\ge -\min\{a,b\}/{(n-1)}$, and the distribution is given by
\begin{equation}\label{probabilities p_n,k}
p_{n,k}^{a,b,c}=P\left( X_{n}^{a,b,c}=k\right) =C_{n}^{k}\frac{a^{\left(
k,c\right) }b^{\left( n-k,c\right) }}{1^{\left( n,c\right) }}, \qquad k\in\{0,1,\ldots,n\},
\end{equation}%
where $x^{\left( 0,h\right) }=1$, $x^{\left( k,h\right) }=x\left( x+h\right)
\cdot \ldots \cdot \left( x+\left( k-1\right) h\right) $ for $k\geq 1$,
denotes the rising factorial with increment $h$.

In the present paper we focus on the case $a=x\in\left[0,1\right]$, $b=1-x$, and the minimal choice of the replacement parameter, $c=-\min\{x,1-x\}/(n-1)$. The reason for this choice is two-fold. The first is that it is an interesting problem of study from the probabilistic point of view, and the second relates to the newly introduced operator $R_n$ (\cite{PaPaTr, PaPaTr2}) defined by
\begin{equation}\label{operator R_n}
R_{n}\left( f,x\right) =Ef\left( \frac{1}{n}X_{n}^{x,1-x,-\min \left\{
x,1-x\right\} /\left( n-1\right) }\right).
\end{equation}%

We show that in this case the corresponding probabilities satisfy a monotonicity property with respect to the initial urn distribution, similar to that of the binomial distribution (the case of the replacement parameter $c=0$), and as a consequence, we show that the random variables with P\'{o}lya urn distribution satisfy a natural stochastic ordering. In an equivalent formulation, this results shows that the opera\-tor $R_n$ has a shape preserving property (monotonicity property), similar to that of the classical Bernstein operator (the case $c=0$, see e.g. \cite{Bustamante}).

The structure of the paper is the following. Section \ref{Auxiliary results} contains some auxiliary results of independent interest, needed in the sequel. In Lemma \ref{Reversed Cauchy inequality} we prove an interesting inequality, which may be seen as a refined version of a reversed Cauchy-Bunyakovsky-Schwarz inequality (see Remark \ref{Reversed Cauchy inequality}). In Lemma \ref{lemma for sum bounds} we give estimates for sums involving functions with three positive continuous derivatives. Lemma \ref{Localization lemma for x_n,k} is a technical result concerning the sign of a certain function, essential for proving our main results.

In Section \ref{Main results} we first prove that the P\'{o}lya urn probabilities satisfy a certain monotonicity property with respect to the initial urn distribution, similar to that of the binomial distribution (Theorem \ref{theorem on monotonicity of probabilities}). Using this, in Theorem \ref{Stochastic ordering of Polya rv} we show that the P\'{o}lya random variables satisfy a natural stochastic ordering. In an equivalent formulation (Theorem \ref{R_n monotone operator}), this results shows that the operator $R_n$ is, similarly to the classical Bernstein operator, a monotone operator.

\section{Auxiliary results}\label{Auxiliary results}

We begin with the following auxiliary result of independent interest.

\begin{lemma}\label{Reversed Cauchy inequality}
For integers $n\geq 2$ and $k\in
\left\{ 1,\ldots ,n-1\right\} $, and positive real numbers $a_{1},\ldots
,a_{n},b_{1},\ldots ,b_{n-k}$ with $\max_{1\leq i\leq n}a_{i}\leq
\min_{1\leq j\leq n-k}b_{j}$ and $\sum_{i=1}^{n}a_{i}=\sum_{j=1}^{n-k}b_{j}$%
, we have $$\sum_{i=1}^{n}a_{i}^{2}<\sum_{j=1}^{n-k}b_{j}^{2}.$$
\end{lemma}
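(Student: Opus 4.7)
The plan is to bound the two sums of squares in opposite directions using the pointwise bounds $\max_i a_i$ and $\min_j b_j$, and then use the assumption $\max_i a_i \le \min_j b_j$ together with the equal-sum condition to obtain the comparison. Write $S = \sum_{i=1}^{n} a_i = \sum_{j=1}^{n-k} b_j$, $M = \max_{1\le i\le n} a_i$, and $m = \min_{1\le j\le n-k} b_j$, so that $M \le m$ by hypothesis.

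First I would observe the two one-line inequalities obtained by factoring one copy of $a_i$ (resp.\ $b_j$) out of each square:
\[
\sum_{i=1}^{n} a_i^2 \;=\; \sum_{i=1}^{n} a_i\cdot a_i \;\le\; M\sum_{i=1}^{n}a_i \;=\; MS,
\qquad
\sum_{j=1}^{n-k} b_j^2 \;=\; \sum_{j=1}^{n-k} b_j\cdot b_j \;\ge\; m\sum_{j=1}^{n-k}b_j \;=\; mS.
\]
Combining these with $MS \le mS$ (using $M\le m$ and $S>0$) immediately yields the weak inequality $\sum_{i=1}^{n}a_i^2 \le \sum_{j=1}^{n-k}b_j^2$.

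The remaining task is to upgrade to strict inequality, and this is where the hypothesis $k\ge 1$ must be used. I would argue by contradiction: if equality held, then all three intermediate inequalities above would have to be equalities. Since the $a_i$ and $b_j$ are strictly positive, $\sum a_i^2 = MS$ forces $a_i = M$ for every $i$, and $\sum b_j^2 = mS$ forces $b_j = m$ for every $j$; moreover $MS = mS$ forces $M = m$. Substituting into $\sum a_i = \sum b_j$ gives $nM = (n-k)M$, hence $k=0$, contradicting $k\in\{1,\dots,n-1\}$.

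I do not anticipate a genuine obstacle here: the key structural insight, that one can sandwich the two sums of squares between $MS$ and $mS$ via a single factor extraction, makes the argument essentially immediate once the right quantities $M$, $m$, $S$ are isolated. The only subtlety worth double-checking is the equality analysis, which uses positivity of the $a_i$'s (to conclude $a_i = M$ rather than allowing $a_i = 0$) and the strict inequality $k\ge 1$ (to derive the final contradiction).
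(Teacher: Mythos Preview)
Your argument is correct and slightly more streamlined than the paper's. Both proofs hinge on the same ``factor out one copy'' idea in $\sum a_i^2 = \sum a_i\cdot a_i$, but the paper bounds the extracted factor by the average $\frac{S}{n-k}$ (justified via $\frac{S}{n-k}=\text{avg}_j\,b_j\ge m\ge M$) and then invokes the Cauchy--Bunyakovsky--Schwarz inequality on the $b$-side to get $\frac{S^2}{n-k}\le\sum b_j^2$; you instead bound directly by $M$ on the $a$-side and by $m$ on the $b$-side, so the two sums are sandwiched between $MS$ and $mS$ without any appeal to CBS. Your route is more elementary and symmetric; the paper's route has the incidental benefit of isolating the intermediate inequality $\sum a_i^2\le\frac{1}{n-k}\bigl(\sum a_i\bigr)^2$, which they then advertise as a refinement of the P\'olya--Szeg\H{o} reversed CBS inequality. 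The equality analyses are essentially identical, both reducing to $nM=(n-k)M$ with $M>0$ and hence $k=0$.
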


\begin{proof}
Without loss of generality we may assume assume $a_{1}\leq \ldots \leq
a_{n}\leq b_{1}\leq \ldots \leq b_{n-k}$.

Note that $\displaystyle\frac{\sum_{i=1}^{n}a_{i}}{n-k}=\frac{\sum_{k=1}^{n-k}b_{j}}{n-k}\geq
b_{1}\geq a_{n}$,  and therefore $\displaystyle a_{i}\leq a_{n}\leq \frac{\sum_{i=1}^{n}a_{i}}{n-k}$, for $i\in \left\{
1,\ldots ,n\right\}$.

We obtain%
\begin{equation}
\sum_{i=1}^{n}a_{i}^{2}=\sum_{i=1}^{n}\left( a_{i}\cdot a_{i}\right) \leq
\frac{1}{n-k}\sum_{i=1}^{n}\left( a_{i}\sum_{j=1}^{n}a_{j}\right) =\frac{1}{%
n-k}\left( \sum_{i=1}^{n}a_{i}\right) ^{2},  \label{aux 7}
\end{equation}%
and using Cauchy-Bunyakovsky-Schwarz inequality we conclude%
\[
\sum_{i=1}^{n}a_{i}^{2}\leq \frac{1}{n-k}\left( \sum_{i=1}^{n}a_{i}\right)
^{2}=\frac{1}{n-k}\left( \sum_{j=1}^{n-k}b_{j}\right) ^{2}\leq
\sum_{j=1}^{n-k}b_{j}^{2}.
\]

We have left to show that the inequality is a strict inequality. If the
inequality in the statement of the lemma were an equality, from the proof
above we conclude that we must have $a_{1}=\ldots =a_{n}=a$ and $%
b_{1}=\ldots =b_{n}=b$. The hypothesis of the lemma becomes in this case $%
na=\left( n-k\right) b$, and in turn, this shows $%
\sum_{i=1}^{n}a_{i}^{2}=na^{2}=\frac{1}{n}\left( n-k\right) ^{2}b^{2}=\frac{%
n-k}{n}\sum_{j=1}^{n-k}b_{j}^{2}<$ $\sum_{j=1}^{n-k}b_{j}^{2}$, thus the equality cannot hold,
concluding the proof.
\end{proof}

\begin{remark}{Remark on reversed CBS inequality}
The inequality (\ref{aux 7}) in the proof above is a particular case of a
reversed Cauchy-Bunyakovsky-Schwarz inequality. For example, the P\'{o}%
lya-Szeg\"{o}'s inequality for sequences of positive numbers is%
\begin{equation}
\frac{\sum_{i=1}^{n}a_{i}^{2}\sum_{i=1}^{n}b_{i}^{2}}{\left(
\sum_{i=1}^{n}a_{i}b_{i}\right) ^{2}}\leq \frac{1}{4}\left( \sqrt{\frac{%
M_{1}M_{2}}{m_{1}m_{2}}}+\sqrt{\frac{m_{1}m_{2}}{M_{1}M_{2}}}\right) ^{2},
\end{equation}%
where $0<m_{1}\leq a_{i}\leq M_{1}<\infty $ and $0<m_{2}\leq b_{i}\leq
M_{2}<\infty $, $i\in \left\{ 1,\ldots ,n\right\} $. Taking $b_{1}=\ldots
=b_{n}=1$ (thus $m_{2}=M_{2}=1$, $m_{1}=a_{1}$, $M_{2}=a_{n}$) we obtain%
\[
\sum_{i=1}^{n}a_{i}^{2}\leq \frac{1}{4}\left( \sqrt{\frac{a_{n}}{a_{1}}}+%
\sqrt{\frac{a_{1}}{a_{n}}}\right) ^{2}\left( \sum_{i=1}^{n}a_{i}\right)
^{2},
\]%
for any positive sequence $a_{1}\leq a_{2}\leq \ldots \leq a_{n}$. Since $%
\frac{1}{4}\left( \sqrt{\frac{a_{n}}{a_{1}}}+\sqrt{\frac{a_{1}}{a_{n}}}%
\right) ^{2}\geq 1$, we see that the inequality (\ref{aux 7}) improves the P%
\'{o}lya-Szeg\"{o} inequality (under the additional hypotheses in the
statement of the lemma).
\end{remark}

We will also need the following auxiliary result.

\begin{lemma}\label{lemma for sum bounds}
Suppose $f\in C^{3}\left( \left[ 0,1\right]
\right) $ is such that $f,f^{\prime },f^{\prime \prime },f^{\prime \prime
\prime }\geq 0$ on $\left[ 0,1\right] $. Then for any integer $N\geq 1$ we
have%
\begin{equation}
0\leq \sum_{i=0}^{N}f\left( \frac{i}{N}\right) -N\int_{0}^{1}f\left( t\right) dt-\frac{f\left( 0\right) +f\left( 1\right) }{2%
}\leq \frac{f^{\prime
}\left( 1\right) -f^{\prime }\left( 0\right) }{4N}  \label{aux lemma}
\end{equation}
\end{lemma}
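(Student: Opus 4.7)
The plan is to interpret the middle expression in (\ref{aux lemma}) as $N$ times the error of the composite trapezoidal rule on $[0,1]$, and to estimate this error one subinterval at a time. Indeed, using the identity
$$\sum_{i=0}^N f\bigl(\tfrac{i}{N}\bigr) - \tfrac{f(0)+f(1)}{2} = \sum_{i=0}^{N-1}\tfrac{1}{2}\bigl(f(i/N)+f((i+1)/N)\bigr),$$
the middle expression in (\ref{aux lemma}) rewrites as a telescoping-friendly sum of $N$ local contributions
$$E := \sum_{i=0}^{N-1}\left[\tfrac{1}{2}\bigl(f(i/N)+f((i+1)/N)\bigr) - N\!\int_{i/N}^{(i+1)/N} f(t)\,dt\right],$$
each depending only on the restriction of $f$ to $[i/N,(i+1)/N]$.

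On a generic subinterval $[a,b]$ of length $1/N$ I will invoke the classical trapezoidal-error identity
$$\tfrac{b-a}{2}\bigl(f(a)+f(b)\bigr) - \int_a^b f(t)\,dt = \tfrac{1}{2}\int_a^b (t-a)(b-t)\,f''(t)\,dt,$$
which follows from integrating the right-hand side by parts twice (the boundary terms vanish since $(t-a)(b-t)$ does). The lower bound in (\ref{aux lemma}) is then immediate: the integrand is nonnegative because $(t-a)(b-t)\ge 0$ and $f''\ge 0$, so each local contribution is $\ge 0$. For the upper bound I will use the pointwise estimate $(t-a)(b-t)\le (b-a)^2/4 = 1/(4N^2)$ (attained at the midpoint) together with $f''\ge 0$ to get a local bound $\tfrac{1}{8N^2}\bigl(f'(b)-f'(a)\bigr)$; multiplying by $N$ and telescoping the differences $f'((i+1)/N)-f'(i/N)$ across $i=0,\dots,N-1$ then yields $E\le \tfrac{f'(1)-f'(0)}{8N}\le \tfrac{f'(1)-f'(0)}{4N}$, as required.

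I do not anticipate a serious obstacle: the only mild subtlety is verifying the two-integration-by-parts identity above, and everything else reduces to elementary bounding and telescoping. It is worth noting that this proof uses only $f''\ge 0$ rather than the full strength of the hypothesis $f,f',f'',f'''\ge 0$, and moreover delivers the sharper constant $1/(8N)$ in place of $1/(4N)$. The extra positivity hypotheses must therefore be present for the sake of later applications of the lemma, or the authors may prefer a different, more discrete argument (for instance via Abel summation against the fractional-part function) in which all four conditions play a role.
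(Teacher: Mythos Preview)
Your argument is correct, and in fact it yields the sharper upper bound $\dfrac{f'(1)-f'(0)}{8N}$ using only the hypothesis $f''\ge 0$. The route, however, is genuinely different from the paper's. The paper argues geometrically: for the lower bound it observes that by convexity the secant trapezoids over each subinterval overestimate the integral (equivalent to your error formula being nonnegative), while for the upper bound it replaces the secant trapezoids by \emph{tangent}-line trapezoids at the left endpoints, which underestimate the integral. This produces an inequality of the form
\[
\sum_{i=0}^{N} f(x_i)\ \le\ f(1)+\frac{f'(1)}{2N}+N\int_0^1 f(t)\,dt-\frac{1}{2N}\sum_{i=0}^{N} f'(x_i),
\]
and the paper then feeds the already-proved lower bound back in, applied to $f'$ in place of $f$, to estimate $\sum f'(x_i)$ from below; this second step is where the extra hypothesis $f'''\ge 0$ (convexity of $f'$) is actually used, and it is what costs the factor of $2$ in the constant. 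Your approach bypasses this bootstrap entirely by invoking the exact trapezoidal error identity $\tfrac12\int_a^b (t-a)(b-t)f''(t)\,dt$ and the elementary bound $(t-a)(b-t)\le (b-a)^2/4$, which is both shorter and, as you note, shows that neither $f\ge 0$, $f'\ge 0$, nor $f'''\ge 0$ is needed for the lemma as stated.
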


\begin{proof}
  \begin{figure}[h!]
  \caption{Trapezoids $T_i$ and $T_i^\prime$ in the proof of Lemma \ref{lemma for sum bounds}: 
      $\text{Area} (T_i^\prime) \le \int_{x_i}^{x_{i+1}} f(x)dx\le \text{Area} (T_i)$.}
      \includegraphics{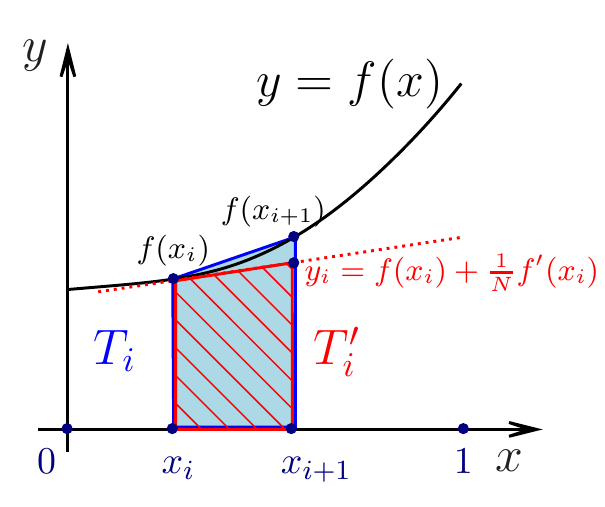}
      \label{fig1}
      \end{figure}
      
Since $f$ is convex, the sum of the areas of trapezoids $T_{0},\ldots ,T_{N-1}$
(see Figure \ref{fig1}) is larger than the area under the graph of $f$, thus%
\[
\int_{0}^{1}f\left( t\right) dt\leq \sum_{i=0}^{N-1}\frac{f\left(
x_{i}\right) +f\left( x_{i+1}\right) }{2}\cdot \frac{1}{N},
\]%
where $x_{i}=\frac{i}{N}$, $i\in \left\{ 0,1,\ldots ,N\right\} $.
Rearranging the terms of the sum we obtain%
\begin{equation}
N\int_{0}^{1}f\left( t\right) dt\leq -\frac{f\left( 0\right) +f\left(
1\right) }{2}+\sum_{i=0}^{N}f\left( \frac{i}{N}\right) ,  \label{aux5}
\end{equation}%
which proves the left inequality in (\ref{aux lemma}).



Since $f$ is convex, the tangent line to the graph of $f$ at $x_{i}$ lies
below the graph of $f$, and therefore the sum of the areas of corresponding
trapezoids $T_{i}^{^{\prime }}$ is smaller than the area under $f$ (see
Figure \ref{fig1}). Summing over $i\in \left\{ 0,1,\ldots ,N-1\right\} $ we
obtain%
\[
\sum_{i=0}^{N-1}\frac{f\left( x_{i}\right) +y_{i}}{2}\cdot \frac{1}{N}\leq
\int_{0}^{1}f\left( t\right) dt,
\]%
where $y_{i}=f\left( x_{i}\right) +f^{\prime }\left( x_{i}\right) \cdot
\frac{1}{N}$, or equivalent%
\[
N\int_{0}^{1}f\left( t\right) dt\geq \sum_{i=0}^{N-1}\frac{f\left(
x_{i}\right) +f\left( x_{i}\right) +\frac{f^{\prime }\left( x_{i}\right) }{N}%
}{2}=\sum_{i=0}^{N-1}f\left( x_{i}\right) +\frac{1}{2N}\sum_{i=0}^{N-1}f^{%
\prime }\left( x_{i}\right) ,
\]%
and therefore%
\[
\sum_{i=0}^{N}f\left( x_{i}\right) \leq f\left( 1\right) +\frac{1}{2N}%
f^{\prime }\left( 1\right) +N\int_{0}^{1}f\left( t\right) dt-\frac{1}{2N}%
\sum_{i=0}^{N}f^{\prime }\left( x_{i}\right) .
\]

Using the inequality (\ref{aux5}) with $f$ replaced by $f^{\prime }$, we
obtain%
\begin{eqnarray*}
&&\sum_{i=0}^{N}f\left( x_{i}\right) \leq f\left( 1\right) +\frac{1}{2N}%
f^{\prime }\left( 1\right) +N\int_{0}^{1}f\left( t\right) dt-\frac{1}{2N}%
\sum_{i=0}^{N}f^{\prime }\left( x_{i}\right) \\
&\leq &f\left( 1\right) +\frac{1}{2N}f^{\prime }\left( 1\right)
+N\int_{0}^{1}f\left( t\right) dt-\frac{1}{2N}\left( N\int_{0}^{1}f^{\prime
}\left( t\right) dt+\frac{f^{\prime }\left( 0\right) +f^{\prime }\left(
1\right) }{2}\right) \\
&=&N\int_{0}^{1}f\left( t\right) dt+\frac{f\left( 0\right) +f\left( 1\right)
}{2}+\frac{f^{\prime }\left( 1\right) -f^{\prime }\left( 0\right) }{4N},
\end{eqnarray*}%
concluding the proof.
\end{proof}

The following technical result is essential for the proof of our main results in the following section.

\begin{lemma}\label{Localization lemma for x_n,k}
For any integers $n\geq 2$ and $k\in
\left\{ 0,\ldots ,n-1\right\} $, there exists $x_{n,k}\in \left[ \frac{k-1}{%
n-1},\frac{k}{n-1}\right] $ such that the function%
\begin{equation}\label{definition of function varphi}
\varphi _{n,k}\left( x\right) =\sum_{i=0}^{n-1}\frac{1}{1-\frac{i}{n-1}x}%
-\sum_{i=0}^{n-k-1}\frac{1}{1-x-\frac{i}{n-1}x}, \quad x\in \left[0,\frac{n-1}{2n-k-2}\right),
\end{equation}%
is positive on $\left[0,x_{n,k}\right)$ and
negative on $\left(x_{n,k},\frac{n-1}{2n-k-2}\right)$.
\end{lemma}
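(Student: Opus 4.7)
Setting $u_i(x) := 1/(1-\tfrac{i}{n-1}x)$, the substitution $i\mapsto i-(n-1)$ in the second sum of (\ref{definition of function varphi}) rewrites it as $\sum_{i=n-1}^{2n-k-2}u_i(x)$, and cancelling the common $i=n-1$ term yields
\[
\varphi_{n,k}(x)=\sum_{i=0}^{n-2}u_i(x)-\sum_{i=n}^{2n-k-2}u_i(x).
\]
Direct evaluation gives $\varphi_{n,k}(0)=(n-1)-(n-k-1)=k$, while as $x\uparrow\tfrac{n-1}{2n-k-2}$ the term $u_{2n-k-2}$ blows up, so $\varphi_{n,k}(x)\to-\infty$. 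For $1\le k\le n-2$ the intermediate value theorem therefore provides a zero in the open interval; the edge cases $k=0$ (where $\varphi_{n,0}(0)=0$ and $\varphi_{n,0}<0$ for $x>0$) and $k=n-1$ (where the second sum collapses and $\varphi_{n,n-1}>0$ throughout) are handled directly by taking $x_{n,k}$ at the appropriate endpoint.

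\textbf{Uniqueness via Lemma \ref{Reversed Cauchy inequality}.} Since $u_i'(x)=\tfrac{i}{n-1}u_i(x)^2$, one has
\[
\varphi_{n,k}'(x)=\sum_{i=0}^{n-2}\tfrac{i}{n-1}u_i(x)^2-\sum_{i=n}^{2n-k-2}\tfrac{i}{n-1}u_i(x)^2.
\]
At any zero $x^*\in(0,\tfrac{n-1}{2n-k-2})$ the unweighted sums $\sum u_i(x^*)$ agree, and since $u_i(x^*)$ is strictly increasing in $i$, the block $\{u_0(x^*),\ldots,u_{n-2}(x^*)\}$ has maximum strictly below the minimum of $\{u_n(x^*),\ldots,u_{2n-k-2}(x^*)\}$. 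Lemma \ref{Reversed Cauchy inequality} (applied with its ``$n$''$=n-1$, ``$k$''$=k$) then forces $\sum_0^{n-2}u_i^2<\sum_n^{2n-k-2}u_i^2$. Combined with the elementary weight bounds $\tfrac{i}{n-1}\le\tfrac{n-2}{n-1}<\tfrac{n}{n-1}\le\tfrac{i}{n-1}$ on the two index blocks, this yields $\varphi_{n,k}'(x^*)<0$. A strictly negative derivative at every zero rules out two zeros: immediately past $x_1$ one would have $\varphi_{n,k}<0$, and a return to zero at a later $x_2$ would force $\varphi_{n,k}'(x_2)\ge 0$, contradicting the previous conclusion.

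\textbf{Localization via Lemma \ref{lemma for sum bounds}.} To place the zero in $[\tfrac{k-1}{n-1},\tfrac{k}{n-1}]$, I evaluate $\varphi_{n,k}$ at the two endpoints. Writing $\sum_{i=0}^{n-1}u_i=\sum_{i=0}^{n-1}f_x(\tfrac{i}{n-1})$ with $f_x(t)=\tfrac{1}{1-tx}$, and the second sum as $\sum_{i=0}^{n-k-1}g_x(\tfrac{i}{n-k-1})$ with $g_x(s)=\tfrac{1}{1-x-\frac{n-k-1}{n-1}xs}$, both functions have positive derivatives up to order three on $[0,1]$, so Lemma \ref{lemma for sum bounds} expands each sum as integral $+$ trapezoidal correction $+$ error of order $1/N$. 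A pleasant computation at $x=\tfrac{k}{n-1}$ shows that $(n-1)\int_0^1 f_x=(n-k-1)\int_0^1 g_x=\tfrac{(n-1)^2}{k}\ln\tfrac{n-1}{n-k-1}$, so the leading integrals cancel exactly and the residual trapezoidal term $-\tfrac{k(2n-k-2)}{2(n-k-1)^2}$ is strictly negative, visibly dominating the $O(1/(n-1))$ error and giving $\varphi_{n,k}(\tfrac{k}{n-1})<0$. At $x=\tfrac{k-1}{n-1}$ (the case $k=1$ is immediate from $\varphi_{n,1}(0)=1$) the same scheme produces a strictly positive leading log-term $\tfrac{(n-1)^2}{k-1}\ln\bigl(1+\tfrac{k-1}{(n-k)^2}\bigr)$ which dominates the boundary and error corrections, giving $\varphi_{n,k}(\tfrac{k-1}{n-1})>0$.

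The main obstacle is this last step: while the exact cancellation of integrals at $\tfrac{k}{n-1}$ makes the upper endpoint essentially automatic, the lower endpoint requires careful Taylor estimates (using, e.g., $\ln(1+y)\ge y/(1+y)$) to verify uniformly in $n$ and $k$ that the positive log-term overwhelms the negative trapezoidal difference and the second-sum error term from Lemma \ref{lemma for sum bounds}.
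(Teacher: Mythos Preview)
Your outline follows the paper's route almost exactly: edge cases, then Lemma~\ref{Reversed Cauchy inequality} for ``$\varphi_{n,k}(x^*)=0\Rightarrow\varphi_{n,k}'(x^*)<0$'', then Lemma~\ref{lemma for sum bounds} for the two endpoint signs. Your cancellation of the shared $u_{n-1}$ term and the weight bounds $\tfrac{i}{n-1}\le\tfrac{n-2}{n-1}<\tfrac{n}{n-1}\le\tfrac{j}{n-1}$ give a clean variant of the uniqueness step; the paper instead uses the identity $\varphi_{n,k}'=-\tfrac1x\varphi_{n,k}+\tfrac1x\bigl(\sum a_i^2-\sum b_j^2\bigr)$ and applies Lemma~\ref{Reversed Cauchy inequality} to the original $n$ versus $n-k$ blocks (with $\max a_i=\min b_j=\tfrac{1}{1-x}$). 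Both reductions land on the same lemma, so this is cosmetic.

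The one place you leave a genuine gap is the one you yourself flag: positivity at $x=\tfrac{k-1}{n-1}$. The paper does not rely on a loose ``log term dominates corrections'' heuristic; it substitutes $A=\bigl(\tfrac{n-k}{n-1}\bigr)^2$, $B=\tfrac{k-1}{(n-1)^2}$, $C=\tfrac{A}{B}=\tfrac{(n-k)^2}{k-1}$, reduces the lower bound to $\tfrac12+\tfrac1B\,h(C)$ with $h(C)=-\tfrac{1}{2(1+C)}-\tfrac{1}{4(1+C)^2}+\tfrac{1}{4C^2}+\ln\bigl(1+\tfrac1C\bigr)$, and then shows $h(C)>0$ for all $C>0$ by checking $h'(C)<0$ and $\lim_{C\to\infty}h(C)=0$. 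Your suggested bound $\ln(1+y)\ge\tfrac{y}{1+y}$ actually \emph{does} close this: it gives $h(C)\ge\tfrac{1}{2(1+C)}-\tfrac{1}{4(1+C)^2}+\tfrac{1}{4C^2}=\tfrac{2C+1}{4(1+C)^2}+\tfrac{1}{4C^2}>0$, so carrying that one line out would complete your argument.
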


\begin{proof}
Under the above hypotheses on $n$ and $k$, it is easy to verify that $\frac{k-1}{%
n-1}<\frac{k}{n}<\frac{n-1}{2n-k-2}\leq 1$ (the last inequality is a strict inequality if $k<n-1$).

If $k=0$, since $\frac{1}{1-x-\frac{i}{n-1}x}>\frac{1}{1-\frac{i}{n-1}x}$
for $i\in \left\{ 0,\ldots ,n-1\right\} $, we have $\varphi _{n,0}\left(
x\right) <0$ for $x\in (0,\frac{n-1}{2n-2})$, and the claim holds true with $%
x_{n,0}=0\in \left[ -\frac{1}{n-1},0\right] $.

If $k=n-1$, $\varphi _{n,k}\left( x\right) =\sum_{i=0}^{n-2}\frac{1}{1-\frac{%
i}{n-1}x}>0$ for $x\in \lbrack 0,\frac{n-1}{2n-\left( n-1\right) -2})=[0,1)$%
, and the claim holds true with $x_{n,n-1}=1\in \left[ \frac{n-2}{n-1},1%
\right] $.

Assume now $n>2$ and $k\in \left\{ 1,\ldots ,n-2\right\} $. We will first
show that if $\varphi _{n,k}\left( x\right) =0$, 
then $\varphi _{n,k}^{^{\prime }}\left( x\right) <0$
(note that $\varphi _{n,k}\left( 0\right) =k>0$, thus $x\neq 0$).

We have%
\begin{eqnarray*}
\varphi _{n,k}^{^{\prime }}\left( x\right) &=&\sum_{i=0}^{n-1}\frac{\frac{i}{%
n-1}}{\left( 1-\frac{i}{n-1}x\right) ^{2}}-\sum_{i=0}^{n-k-1}\frac{1+\frac{i%
}{n-1}}{\left( 1-x-\frac{i}{n-1}x\right) ^{2}} \\
&=&\frac{1}{x}\left( \sum_{i=0}^{n-1}\frac{-\left( 1-x\frac{i}{n-1}\right) +1%
}{\left( 1-\frac{i}{n-1}x\right) ^{2}}-\sum_{i=0}^{n-k-1}\frac{-\left(
1-x\left( 1+\frac{i}{n-1}\right) \right) +1}{\left( 1-x-\frac{i}{n-1}%
x\right) ^{2}}\right) \\
&=&-\frac{1}{x}\varphi _{n,k}\left( x\right) +\frac{1}{x}\left(
\sum_{i=0}^{n-1}\frac{1}{\left( 1-\frac{i}{n-1}x\right) ^{2}}%
-\sum_{i=0}^{n-k-1}\frac{1}{\left( 1-x-\frac{i}{n-1}x\right) ^{2}}\right) .
\end{eqnarray*}

If $\varphi _{n,k}\left( x\right) =0$, we obtain $\varphi _{n,k}^{^{\prime
}}\left( x\right) =\frac{1}{x}\left( \sum_{i=0}^{n-1}\frac{1}{\left( 1-\frac{%
i}{n-1}x\right) ^{2}}-\sum_{i=0}^{n-k-1}\frac{1}{\left( 1-x-\frac{i}{n-1}%
x\right) ^{2}}\right) $, and we have left to prove the implication%
\[
\sum_{i=0}^{n-1}\frac{1}{1-\frac{i}{n-1}x}=\sum_{i=0}^{n-k-1}\frac{1}{1-x-%
\frac{i}{n-1}x}\Rightarrow \sum_{i=0}^{n-1}\frac{1}{\left( 1-\frac{i}{n-1%
}x\right) ^{2}}<\sum_{i=0}^{n-k-1}\frac{1}{\left( 1-x-\frac{i}{n-1}x\right)
^{2}}.
\]

Choosing $a_{i+1}=\frac{1}{1-\frac{i}{n-1}x}$, $i\in \left\{ 0,\ldots
,n-1\right\} $ and $b_{j+1}=\frac{1}{1-x-\frac{j}{n-1}x}$, $j\in \left\{
0,\ldots ,n-k-1\right\} $, we have $\max_{1\leq i\leq n}a_{i}=a_{n}=\frac{1}{%
1-x}=b_{1}=\min_{1\leq j\leq n-k}b_{j}$, and the above implication follows
from Lemma \ref{Reversed Cauchy inequality}, concluding the proof of the
claim.

We showed that $\varphi _{n,k}\left( x\right) =0$ implies $\varphi
_{n,k}^{^{\prime }}\left( x\right) <0$. Since $\varphi _{n,k}$ is
continuously differentiable, a moment's thought shows that this condition
implies that $\varphi _{n,k}$ can change signs at most once on the interval $%
\left[0,\frac{n-1}{2n-k-2}\right)$.

Since $\varphi _{n,k}\left( 0\right) =n-\left( n-k\right) =k>0$ and $%
\lim_{x\nearrow \frac{n-1}{2n-k-2}}\varphi _{n,k}\left( x\right) =-\infty $,
the function $\varphi _{n,k}$ changes sign on $\left[0,\frac{n-1}{2n-k-2}\right)$, and
let $x_{n,k}$ denote its unique root. We have left to show that $x_{n,k}$
belongs to the specified interval.

Using Lemma \ref{lemma for sum bounds} with $N=n-1$ and $%
f\left( t\right) =\frac{1}{1-tx}$, respectively with $N=n-k-1$ and $f\left(
t\right) =\frac{1}{1-x-t\frac{n-k-1}{n-1}x}$, we obtain:%
\begin{eqnarray*}
&&\varphi _{n,k}\left( x\right)
\leq \left( \left( n-1\right) \int_{0}^{1}\frac{1}{1-tx}dt+\frac{1+\frac{1%
}{1-x}}{2}+\frac{\frac{x}{\left( 1-x\right) ^{2}}-x}{4\left( n-1\right) }%
\right) \\
&&-\left( \left( n-k-1\right) \int_{0}^{1}\frac{1}{1-x-tx\frac{n-k-1}{%
n-1}}dt+\frac{\frac{1}{1-x}+\frac{1}{1-x-\frac{n-k-1}{n-1}x}}{2}\right) \\
&=&\left( -\frac{n-1}{x}\ln \left( 1-x\right) +\frac{1+\frac{1}{1-x}}{2}+%
\frac{\frac{x}{\left( 1-x\right) ^{2}}-x}{4\left( n-1\right) }\right)\\
&&-\left( -\frac{n-1}{x}\ln \frac{1-x-\frac{n-k-1}{n-1}x}{1-x}+\frac{\frac{1}{%
1-x}+\frac{1}{1-x-\frac{n-k-1}{n-1}x}}{2}\right) \\
&=&\frac{1}{2}\left( 1-\frac{1}{1-x-\frac{n-k-1}{n-1}x}\right) +\frac{%
x^{2}\left( 2-x\right) }{4\left( n-1\right) \left( 1-x\right) ^{2}}+\frac{n-1%
}{x}\ln \frac{1-x-\frac{n-k-1}{n-1}x}{\left( 1-x\right) ^{2}}.
\end{eqnarray*}

In particular, for $x=\frac{k}{n-1}$ we obtain $\varphi _{n,k}\left( \frac{k%
}{n-1}\right) \leq \frac{k\left( 2n-k-2\right) \left( k-2\left( n-1\right)
^{2}\right) }{4\left( n-1\right) ^{2}\left( n-k-1\right) ^{2}}<0$, which
shows that $x_{n,k}<\frac{k}{n-1}$.

In order to obtain the lower bound for $x_{n,k}$, first note that for $k=1$
the claim is trivial ($\varphi _{n,1}\left( 0\right) =1>0$, thus $x_{n,1}>0$%
), so we may assume $k\in \left\{ 2,\ldots ,n-1\right\} $. Using again Lemma %
\ref{lemma for sum bounds} with the same choices as above, we obtain%
\begin{eqnarray*}
\varphi _{n,k}\left( x\right)
&\geq&\left( \left( n-1\right) \int_{0}^{1}\frac{1}{1-tx}dt+\frac{1+\frac{1%
}{1-x}}{2}\right) -\left( \left( n-k-1\right) \int_{0}^{1}\frac{1}{1-x-tx%
\frac{n-k-1}{n-1}}dt\right.\\
&&\left.+\frac{\frac{1}{1-x}+\frac{1}{1-x-\frac{n-k-1}{n-1}x}}{2}%
+\frac{\frac{\frac{n-k-1}{n-1}x}{\left( 1-x-\frac{n-k-1}{n-1}x\right) ^{2}}-%
\frac{\frac{n-k-1}{n-1}x}{\left( 1-x\right) ^{2}}}{4\left( n-k-1\right) }%
\right) \\
&=&\left( -\frac{n-1}{x}\ln \left( 1-x\right) +\frac{1+\frac{1}{1-x}}{2}%
\right) -\left( -\frac{n-1}{x}\ln \frac{1-x-\frac{n-k-1}{n-1}x}{1-x}\right.\\
&&\left. +\frac{%
\frac{1}{1-x}+\frac{1}{1-x-\frac{n-k-1}{n-1}x}}{2}+\frac{\frac{\frac{n-k-1}{%
n-1}x}{\left( 1-x-\frac{n-k-1}{n-1}x\right) ^{2}}-\frac{\frac{n-k-1}{n-1}x}{%
\left( 1-x\right) ^{2}}}{4\left( n-k-1\right) }\right) \\
&=&\frac{1}{2}\left( 1-\frac{1}{1-x-\frac{n-k-1}{n-1}x}\right) -\left( \frac{%
1}{\left( 1-x-\frac{n-k-1}{n-1}x\right) ^{2}}-\frac{1}{\left( 1-x\right) ^{2}%
}\right) \frac{x}{4\left( n-1\right) }\\
&& +\frac{n-1}{x}\ln \frac{1-x-\frac{n-k-1%
}{n-1}x}{\left( 1-x\right) ^{2}}.
\end{eqnarray*}

To simplify the following computation, denote by $A=\left( \frac{n-k}{n-1}%
\right) ^{2}$, $B=\frac{k-1}{\left( n-1\right) ^{2}}$, and $C=\frac{A}{B}=%
\frac{\left( n-k\right) ^{2}}{k-1}$. For $x=\frac{k-1}{n-1}$, the above
becomes%
\begin{eqnarray*}
\varphi _{n,k}\left( x\right) \left( \frac{k-1}{n-1}\right) &\geq &\frac{1}{2%
}\left( 1-\frac{1}{A+B}\right) -\left( \frac{1}{\left( A+B\right) ^{2}}-%
\frac{1}{A^{2}}\right) \frac{B}{4}+\frac{1}{B}\ln \left( \frac{A+B}{A}%
\right) \\
&= & \frac12+ \frac{1}{B}\left(-\frac{1}{2\left( 1+C\right) }-\frac{1}{%
4\left( 1+C\right) ^{2}}+\frac{1}{4C^{2}}+\ln \left( 1+\frac{1}{C}\right)
\right) 
\end{eqnarray*}

Since
\begin{eqnarray*}
&&\frac{d}{dC}\left( -\frac{1}{2\left( 1+C\right) }-\frac{1}{4\left(
1+C\right) ^{2}}+\frac{1}{4C^{2}}+\ln \left( 1+\frac{1}{C}\right) \right)\\
&=&\left( \frac{1}{2\left( 1+C\right) ^{2}}-\frac{1}{C\left( 1+C\right) }%
\right) +\left( \frac{1}{2\left( 1+C\right) ^{3}}-\frac{1}{2C^{3}}\right)
<0,
\end{eqnarray*}%
and $\lim_{C\rightarrow \infty
}\left( -\frac{1}{2\left( 1+C\right) }-\frac{1}{4\left( 1+C\right) ^{2}}+%
\frac{1}{4C^{2}}+\ln \left( 1+\frac{1}{C}\right) \right) =0$, we conclude
that $$\left( -\frac{1}{2\left( 1+C\right) }-\frac{1}{4\left( 1+C\right) ^{2}}%
+\frac{1}{4C^{2}}+\ln \left( 1+\frac{1}{C}\right) \right) >0, \qquad \text{for all } C>0.$$

Using this and the previous inequality we obtain $x_{n,k}>\frac{k-1}{n-1}$, concluding the proof.
\end{proof}

\section{Main results}\label{Main results}

We can now prove the first main result, as follows.

\begin{theorem}\label{theorem on monotonicity of probabilities}
For arbitrarily fixed
integers $n\geq 2$ and $k\in \left\{ 0,1,\ldots ,n\right\} $, the
probability $$p_{n,k}\left( x\right) =p_{n,k}^{x,1-x,-\min \left\{
x,1-x\right\} /\left( n-1\right) }\left( x\right) $$ given by (\ref%
{probabilities p_n,k}) increases for $x\in \left[ 0,x_{n,k}^{\ast }\right] $
and decreases for $x\in \left[ x_{n,k}^{\ast },1\right] $, where
\begin{equation}
x_{n,k}^{\ast }=\left\{
\begin{tabular}{ll}
$x_{n,k},$ & if $k\leq \frac{n-1}{2}$ \\
$\frac{1}{2},$ & if $\frac{n-1}{2}<k<\frac{n+1}{2}$ \\
$1-x_{n,n-k},$ & if $k\geq \frac{n+1}{2}$%
\end{tabular}%
\right. ,  \label{definition of x^*}
\end{equation}%
and $x_{n,k}\in \left[ \frac{k-1}{n-1},\frac{k}{n-1}\right] $ are given by Lemma \ref{Localization lemma for x_n,k}.
\end{theorem}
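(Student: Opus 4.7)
The plan is to compute the logarithmic derivative of $p_{n,k}(x)$ on $(0,1/2]$ and identify it with $\varphi_{n,k}(x)/x$ from Lemma \ref{Localization lemma for x_n,k}, then exploit the symmetry $p_{n,k}(x)=p_{n,n-k}(1-x)$ to handle $x\geq 1/2$ and $k\geq (n+1)/2$. The symmetry is immediate from (\ref{probabilities p_n,k}): $\min\{x,1-x\}$ is invariant under $x\mapsto 1-x$, so the parameter $c$ is unchanged, $C_n^k=C_n^{n-k}$, and the roles of the two rising factorials are interchanged.

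On $(0,1/2]$ we have $c=-x/(n-1)$, and every factor of the rising factorials is linear in $x$: $x+jc=x(1-j/(n-1))$, $(1-x)+jc=1-x(n-1+j)/(n-1)$, and $1+jc=1-jx/(n-1)$. Taking $\frac{d}{dx}\log p_{n,k}(x)$ and applying the identity $\frac{\alpha}{1-\alpha x}=\frac{1}{x}\bigl(\frac{1}{1-\alpha x}-1\bigr)$ to every non-constant factor, the constant $\frac{1}{x}$ contributions from the three products telescope as $\frac{k+(n-k)-n}{x}=0$, leaving exactly $\frac{d}{dx}\log p_{n,k}(x)=\frac{\varphi_{n,k}(x)}{x}$. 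Lemma \ref{Localization lemma for x_n,k} then controls the sign on $[0,1/2]$.

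Three cases assemble the conclusion. If $k\leq (n-1)/2$, then $x_{n,k}\leq k/(n-1)\leq 1/2$, so $p_{n,k}$ increases on $[0,x_{n,k}]$ and decreases on $[x_{n,k},1/2]$; for $x\in[1/2,1]$ the symmetry gives $p_{n,k}(x)=p_{n,n-k}(1-x)$, and since $k\leq (n-1)/2$ forces $x_{n,n-k}\geq (n-k-1)/(n-1)\geq 1/2$, the function $p_{n,n-k}$ is non-decreasing on $[0,1/2]$, so $p_{n,k}$ is non-increasing on $[1/2,1]$. If $n$ is even and $k=n/2$, the identity $p_{n,k}(x)=p_{n,k}(1-x)$ forces $\varphi_{n,k}(1/2)=0$, so by the uniqueness of the zero in Lemma \ref{Localization lemma for x_n,k} we obtain $x_{n,k}=1/2=x_{n,k}^{\ast}$. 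If $k\geq (n+1)/2$, one applies the first case to $p_{n,n-k}$ via the symmetry, translating the maximum to $1-x_{n,n-k}$.

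The main obstacle is the bookkeeping in the log-derivative computation -- verifying that the three $\frac{1}{x}$ contributions from the $k$, $n-k$, and $n$ factors cancel exactly with no residual to yield $\varphi_{n,k}/x$ -- together with handling the non-smooth point $x=1/2$, where $c=-\min\{x,1-x\}/(n-1)$ switches analytic expressions while $p_{n,k}$ itself remains continuous. Once these checks are in place, the three pieces of the piecewise definition of $x_{n,k}^{\ast}$ fit together cleanly and the monotonicity follows.
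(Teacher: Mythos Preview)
Your overall strategy matches the paper's: compute the logarithmic derivative on $(0,1/2)$, identify it with $\varphi_{n,k}(x)/x$, and transfer to $(1/2,1)$ via the symmetry $p_{n,k}(x)=p_{n,n-k}(1-x)$. The cases $k\le (n-1)/2$ and $k\ge (n+1)/2$ are handled correctly.

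The middle case ($n=2m$, $k=m$) contains a genuine error. You assert that the symmetry $p_{n,k}(x)=p_{n,k}(1-x)$ forces $\varphi_{n,k}(1/2)=0$. This would follow only if $p_{n,k}$ were differentiable at $x=1/2$, but it is not: the parameter $c=-\min\{x,1-x\}/(n-1)$ switches branches there, and $p_{n,k}$ typically has a corner. For instance, with $n=2$, $k=1$ one computes $p_{2,1}(x)=2\min\{x,1-x\}$ and $\varphi_{2,1}(x)\equiv 1$, so $\varphi_{2,1}(1/2)=1\neq 0$. The symmetry only tells you that the left and right one-sided derivatives at $1/2$ are negatives of each other, which is compatible with $\varphi_{n,k}(1/2)>0$. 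Worse, symmetry alone does not rule out the scenario $x_{n,k}<1/2$, in which $p_{n,k}$ would rise on $[0,x_{n,k}]$, fall on $[x_{n,k},1/2]$, and by reflection rise again on $[1/2,1-x_{n,k}]$, producing two local maxima and contradicting the stated unimodality. You therefore need an independent argument that $\varphi_{2m,m}(1/2)\ge 0$; the paper supplies a short direct estimate (pairing the $2m$ terms of the first sum and comparing to twice the $m$ terms of the second) showing in fact $\varphi_{2m,m}(1/2)>0$, hence $x_{2m,m}>1/2$ and $x_{n,k}^\ast=1/2$.
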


\begin{proof}
First note that $p_{n,0}\left( x\right) =\prod_{i=0}^{n-1}\frac{\left(
1-x-i\min \left\{ x,1-x\right\} /\left( n-1\right) \right) }{1-i\min \left\{
x,1-x\right\} /\left( n-1\right) }$ is a decreasing function of $x\in \left[
0,1\right] $ (each factor decreases in $x$), and similarly $p_{n,n}\left(
x\right) =\prod_{i=0}^{n-1}\frac{\left( x-i\min \left\{ x,1-x\right\}
/\left( n-1\right) \right) }{1-i\min \left\{ x,1-x\right\} /\left(
n-1\right) }$ is an increasing function of $x\in \left[ 0,1\right] $ (each
factor increases in $x$). The claim of the theorem holds therefore true in
the cases $k=0$ and $k=n$ ($x_{n,0}^{\ast }=x_{n,0}=0$, respectively $%
x_{n,n}^{\ast }=1-x_{n,0}=1$), and we can assume that $k\in \left\{ 1,\ldots
,n-1\right\} $.

For $x\in \left( 0,1/2\right) $, we have%
\begin{eqnarray}
\frac{d}{dx}\ln p_{n,k}\left( x\right) &=&\frac{d}{dx}\left( \ln
C_{n}^{k}+\sum_{i=0}^{k-1}\ln \left( x-\frac{i}{n-1}x\right)\right. \label{first derivative} \\
&&\left.+\sum_{i=0}^{n-k-1}\ln \left( 1-x-\frac{i}{n-1}x \right) -\sum_{i=0}^{n-k-1}\ln \left( 1-\frac{i}{n-1}x\right) \right)\nonumber\\
&=&\frac{k}{x}+\sum_{i=0}^{n-k-1}\frac{-\left( 1+\frac{i}{n-1}\right) }{1-x-%
\frac{i}{n-1}x}-\sum_{i=0}^{n-1}\frac{-\frac{i}{n-1}}{1-\frac{i}{n-1}x}\nonumber \\
&=&\frac{k}{x}+\frac{1}{x}\sum_{i=0}^{n-k-1}\frac{1-\left( 1+\frac{i}{n-1}%
\right) x-1}{1-x-\frac{i}{n-1}x}-\frac{1}{x}\sum_{i=0}^{n-1}\frac{1-\frac{i}{%
n-1}x-1}{1-\frac{i}{n-1}x}  \nonumber \\
&=&\frac{k}{x}+\frac{n-k}{x}-\frac{1}{x}\sum_{i=0}^{n-k-1}\frac{1}{1-x-\frac{%
i}{n-1}x}-\frac{n}{x}+\frac{1}{x}\sum_{i=0}^{n-1}\frac{1}{1-\frac{i}{n-1}x}
\nonumber \\
&=&\frac{1}{x}\varphi _{n,k}\left( x\right) ,  \nonumber
\end{eqnarray}%
in the notation of Lemma \ref{Localization lemma for x_n,k}, and a similar
computation shows
\begin{equation}
\frac{d}{dx}\ln p_{n,k}\left( x\right) =-\frac{1}{1-x}\varphi _{n,n-k}\left(
1-x\right) ,\qquad x\in \left( 1/2,1\right) ,  \label{first derivative biss}
\end{equation}%
(alternatively, in order to derive the above one can use the relation $p_{n,k}\left(
x\right) =p_{n,n-k}\left( 1-x\right) $, valid for any $x\in \left[ 0,1\right]
$, $n\geq 2$, and $k\in \left\{ 0,1,\ldots ,n\right\} $).

It remains to show that the information about the sign of $\varphi
_{n,k}\left( x\right) $ given by Lemma \ref{Localization lemma for x_n,k}
translates into the monotonicity of $p_{n,k}\left( x\right) $ indicated in
the statement of the theorem.

Note that by Lemma \ref{Localization lemma for x_n,k} we have
\begin{equation}
x_{n,k}\in \left[ \frac{k-1}{n-1},\frac{k}{n-1}\right] ,\qquad \text{for all
}n\geq 2\text{ and }k\in \left\{ 1,\ldots ,n-1\right\} .
\label{localization of x}
\end{equation}

If $\frac{k}{n-1}\leq \frac{1}{2}$, Lemma \ref{Localization lemma for x_n,k}
and (\ref{first derivative}) show that $p_{n,k}$ increases on $\left[
0,x_{n,k}\right] $, and decreases on $\left[ x_{n,k},1/2\right] $ (note that
$x_{n,k}\leq \frac{1}{2}$ by (\ref{localization of x}) in this case). Since $%
x_{n,n-k}\geq \frac{n-k-1}{n-1}=1-\frac{k}{n-1}\geq \frac{1}{2}$, the
function $\varphi _{n,n-k}\left( x\right) $ is positive for $x\in \left[
0,1/2\right] \subset \left[ 0,x_{n,n-k}\right] $, and from (\ref{first
derivative biss}) it follows that $p_{n,k}$ decreases on $\left[ 1/2,1\right]
$. Since $p_{n,k}$ is a continuous function on $\left[ 0,1\right] $, it
follows that $p_{n,k}$ increases on $\left[ 0,x_{n,k}\right] $ and decreases
on $\left[ x_{n,k},1\right] $, and therefore the claim of the theorem holds
in this case with $x_{n,k}^{\ast }=x_{n,k}$.

If $\frac{k-1}{n-1}\geq \frac{1}{2}$, Lemma \ref{Localization lemma for
x_n,k} and (\ref{first derivative}) show that $p_{n,k}$ increases on $\left[
0,1/2\right] \subset \left[ 0,x_{n,k}\right] $ (note that $x_{n,k}\geq \frac{%
1}{2}$ by (\ref{localization of x}) in this case). Since $x_{n,n-k}\leq
\frac{n-k}{n-1}=1-\frac{k-1}{n-1}\leq \frac{1}{2}$, the function $\varphi
_{n,n-k}\left( x\right) $ is positive for $x\in \left[ 0,x_{n,n-k}\right]
\subset \left[ 0,1/2\right] $ and negative for $x\in \left[ x_{n,n-k},1/2%
\right] $, and from (\ref{first derivative biss}) it follows that $p_{n,k}$
increases on $\left[ \frac{1}{2},1-x_{n,n-k}\right] $, and decreases on $%
\left[ 1-x_{n,n-k},1\right] $. Since $f$ is continuous, it follows that $%
p_{n,k}$ increases on $\left[ 0,1-x_{n,n-k}\right] $ and decreases on $\left[
1-x_{n,n-k},1\right] $, and therefore the claim of the theorem holds in this
case with $x_{n,k}^{\ast }=1-x_{n,n-k}$.

We have left to consider the case $\frac{k-1}{n-1}<\frac{1}{2}<\frac{k}{n-1}$%
, or equivalent $\frac{n-1}{2}<k<\frac{n-1}{2}+1$. If $n$ is odd, the
previous double inequality is not satisfied for any integer $k$, so assume $%
n=2m$ is even. The previous double inequality gives $m-\frac{1}{2}<k<m+\frac{%
1}{2}$, which is satisfied only for $k=m$. We have
\[
\sum_{i=0}^{2m-1}\frac{1}{1-\frac{i}{n-1}\cdot \frac{1}{2}}%
>\sum_{j=0}^{m-1}\left( \frac{1}{1-\frac{2j}{n-1}\cdot \frac{1}{2}}+\frac{1}{1-\frac{2j}{n-1}\cdot \frac{1}{2}}\right) =2\sum_{j=0}^{m-1}\frac{1}{1-\frac{j}{n-1}},
\]%
and therefore
\[
\varphi _{2m,m}\left( \frac{1}{2}\right) =\sum_{i=0}^{2m-1}\frac{1}{1-\frac{i%
}{n-1}\cdot \frac{1}{2}}-\sum_{j=0}^{m-1}\frac{1}{\frac{1}{2}-\frac{j}{n-1}%
\cdot \frac{1}{2}}>0.
\]

Since $\varphi _{n,k}\left( x\right) =\varphi _{2m,m}\left( x\right)
=\varphi _{n,n-k}\left( x\right) $ for any $x\in \left[ 0,1\right] $, the
previous inequality shows that $x_{n,k}=x_{2m,m}=x_{n,n-k}>\frac{1}{2}$
(thus $\varphi _{n,k}=\varphi _{n,n-k}$ are positive on $\left[ 0,\frac{1}{2}%
\right] $). Using (\ref{first derivative}) and (\ref{first derivative biss})
we conclude that $p_{n,k}$ increases on $\left[ 0,\frac{1}{2}\right] $ and
decreases on $\left[ \frac{1}{2},1\right] $, thus the claim of the theorem
holds with $x_{n,k}^{\ast }=\frac{1}{2}$in this case, concluding the proof.
\end{proof}

\begin{remark}
\label{remark on localization of x^*}Since $p_{n,k}\left( x\right)
=p_{n,n-k}\left( 1-x\right) $ for $x\in \left[ 0,1\right] $, from the previous
theorem it follows that $x_{n,k}^{\ast }=1-x_{n,n-k}^{\ast }$, for all $%
n\geq 2$ and $k\in \left\{ 0,\ldots ,n\right\} $.

Also note that since $%
x_{n,k}\in \left[ \frac{k-1}{n-1},\frac{k}{n-1}\right] $, from (\ref%
{definition of x^*}) it follows that we also have $x_{n,k}^{\ast }\in \left[
\frac{k-1}{n-1},\frac{k}{n-1}\right] $ for all $n\geq 2$ and $k\in \left\{
0,1,\ldots ,n\right\} $.
\end{remark}

We are now ready to prove the main result. Recall that a random variables
variable $X$ is smaller than the random variable $Y$ in the usual stochastic
order (in symbols $X\leq _{\text{st}}Y$) if the corresponding distribution
functions $F_{X}$ and $F_{Y}$ satisfy $F_{X}\left( x\right) \geq F_{Y}\left(
x\right) $ for all $x\in \mathbb{R}$.

\begin{theorem}\label{Stochastic ordering of Polya rv}
For any $n\geq 2$, the random variables $X_{n}^{x,1-x,-\min \left\{ x,1-x\right\} /\left( n-1\right) }$ with Polya urn distribution given by (\ref%
{probabilities p_n,k}) satisfy the following stochastic ordering
\[
X_{n}^{x,1-x,-\min \left\{ x,1-x\right\} /\left( n-1\right) }\leq _{\text{st}%
}X_{n}^{y,1-y,-\min \left\{ x,1-y\right\} /\left( n-1\right) }, \qquad 0\leq x\leq y\leq 1.
\]%
\end{theorem}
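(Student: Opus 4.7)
The plan is to reduce the stochastic ordering claim to showing that, for each $k\in\{0,1,\ldots,n\}$, the cumulative distribution
$$F_k(x):=P\bigl(X_n^{x,1-x,-\min\{x,1-x\}/(n-1)}\le k\bigr)=\sum_{j=0}^{k}p_{n,j}(x)$$
is non-increasing in $x\in[0,1]$. This is exactly the definition of the usual stochastic order, and the case $k=n$ is trivial (it is the constant $1$), so I would fix $k\in\{0,\ldots,n-1\}$.

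The key structural input I would extract from Theorem \ref{theorem on monotonicity of probabilities} and Remark \ref{remark on localization of x^*} is the monotonicity of the peak locations in $j$. Indeed, since $x_{n,j}^{\ast}\in[\tfrac{j-1}{n-1},\tfrac{j}{n-1}]$ for every $j\in\{0,1,\ldots,n\}$, we get in one line
$$0=x_{n,0}^{\ast}\le x_{n,1}^{\ast}\le\cdots\le x_{n,n}^{\ast}=1,$$
because $x_{n,j-1}^{\ast}\le\tfrac{j-1}{n-1}\le x_{n,j}^{\ast}$ for $1\le j\le n$. Once this peak ordering is available, I would pivot the argument on $x_{n,k}^{\ast}$ and use the unimodality of each $p_{n,j}$ around its own peak.

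Concretely, I would fix $0\le x\le y\le 1$ and distinguish two cases. If $y\le x_{n,k}^{\ast}$, then for every $j\ge k+1$ the peak $x_{n,j}^{\ast}\ge x_{n,k+1}^{\ast}\ge x_{n,k}^{\ast}\ge y$, so by Theorem \ref{theorem on monotonicity of probabilities} the function $p_{n,j}$ is non-decreasing on $[0,x_{n,j}^{\ast}]\supseteq[x,y]$; summing over $j>k$ and using $\sum_{j=0}^{n}p_{n,j}\equiv 1$ yields $F_k(x)\ge F_k(y)$. Symmetrically, if $x\ge x_{n,k}^{\ast}$, then for every $j\le k$ the peak $x_{n,j}^{\ast}\le x_{n,k}^{\ast}\le x$, so $p_{n,j}$ is non-increasing on $[x_{n,j}^{\ast},1]\supseteq[x,y]$; summing over $j\le k$ gives again $F_k(x)\ge F_k(y)$. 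The general case $x\le x_{n,k}^{\ast}\le y$ is then handled by inserting $x_{n,k}^{\ast}$ as an intermediate point and concatenating the two inequalities $F_k(x)\ge F_k(x_{n,k}^{\ast})\ge F_k(y)$.

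There is no real obstacle beyond Theorem \ref{theorem on monotonicity of probabilities}: the whole argument is a unimodality/peak-ordering bookkeeping, and the only step one has to write down carefully is the elementary derivation of the peak monotonicity $x_{n,j-1}^{\ast}\le x_{n,j}^{\ast}$ from the localization interval. The hardest work has already been absorbed into Lemma \ref{Localization lemma for x_n,k} and Theorem \ref{theorem on monotonicity of probabilities}.
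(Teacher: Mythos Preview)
Your argument is correct and rests on the same two ingredients the paper uses: the unimodality of each $p_{n,j}$ (Theorem~\ref{theorem on monotonicity of probabilities}) and the ordering $x_{n,0}^{\ast}\le x_{n,1}^{\ast}\le\cdots\le x_{n,n}^{\ast}$ coming from the localization $x_{n,j}^{\ast}\in[\tfrac{j-1}{n-1},\tfrac{j}{n-1}]$. The organization, however, differs. The paper proceeds by induction on $k$: on $[x_{n,k}^{\ast},1]$ it writes $F_x(k)=F_x(k-1)+p_{n,k}(x)$ and invokes the inductive hypothesis, while on $[0,x_{n,k}^{\ast}]$ it passes through the symmetry $p_{n,i}(x)=p_{n,n-i}(1-x)$ to rewrite $F_x(k)=1-\sum_{i=0}^{n-k-1}p_{n,i}(1-x)$ and then uses the peak ordering. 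Your version dispenses with both the induction and the explicit symmetry: on each side of $x_{n,k}^{\ast}$ you simply sum the $p_{n,j}$ that are all simultaneously monotone there (the lower tail $j\le k$ to the right of $x_{n,k}^{\ast}$, the upper tail $j\ge k+1$ to the left), which is a cleaner and more symmetric bookkeeping of the same content. Nothing is lost or gained mathematically; your packaging is just a bit more direct.
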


\begin{proof}
Fix $n\geq 2$ and denote by $F_{x}\left( \cdot \right) $ the distribution
function of the random variable $X_{n}^{x,1-x,-\min \left\{ x,1-x\right\}
/\left( n-1\right) }$, $x\in \left[ 0,1\right] $. In order to prove the
claim, it suffices to show that for any $k\in \left\{ 0,1,\ldots ,n\right\} $%
, $F_{x}\left( k\right) $ is a decrea\-sing function of $x\in \left[ 0,1\right]
$, and we will prove this inductively on $k$.

Since $F_{x}\left( 0\right) =P\left( X_{n}^{x,1-x,-\min \left\{
x,1-x\right\} /\left( n-1\right) }=0\right) =p_{n,0}\left( x\right) $ is a
decreasing function of $x\in \left[ 0,1\right] $ (by Theorem \ref{theorem on
monotonicity of probabilities}), the claim holds true for $k=0$.

Assume now that the claim is true for $k-1$, i.e. $F_{x}\left( k-1\right) $
is decreasing in $x\in \left[ 0,1\right] $.

Theorem \ref{theorem on monotonicity of probabilities} shows that $%
p_{n,k}\left( x\right) $ is a decreasing function of $x\in \left[
x_{n,k}^{\ast },1\right] $, and therefore $F_{x}\left( k\right) =F_{x}\left(
k-1\right) +p_{n,k}\left( x\right) $ is decreasing for $x\in $ $\left[
x_{n,k}^{\ast },1\right] $.

Considering now $x\in \left[ 0,x_{n,k}^{\ast }\right] $, we observe that%
\begin{equation}
F_{x}\left( k\right) =\sum_{i=0}^{k}p_{n,i}\left( x\right)
=1-\sum_{i=k+1}^{n}p_{n,n-i}\left(
1-x\right) =1-\sum_{i=0}^{n-k-1}p_{n,i}\left( 1-x\right) .  \label{aux3}
\end{equation}

Using Remark \ref{remark on localization of x^*} we obtain $x_{n,k}^{\ast
}+x_{n,n-k-1}^{\ast }\leq \frac{k}{n-1}+\frac{n-k-1}{n-1}=1$, and it follows
that for $x\in \left[ 0,x_{n,k}^{\ast }\right] $ we have%
\begin{equation}
1-x\geq 1-x_{n,k}^{\ast }\geq x_{n,n-k-1}^{\ast }\geq x_{n,i}^{\ast },\qquad
i\in \left\{ 0,1,\ldots ,n-k-1\right\} ,  \label{aux4}
\end{equation}%
since by Remark \ref{remark on localization of x^*} we have $x_{n,i}^{\ast
}\leq \frac{i}{n-1}\leq x_{n,i+1}^{\ast }$ for all $i\in \left\{ 0,1,\ldots
n-1\right\} $.

Using (\ref{aux3}) and (\ref{aux4}), together with the monotonicity of $%
p_{n,i}$ given by Theorem \ref{theorem on monotonicity of probabilities}, it
follows that $F_{x}\left( k\right) $ is also decreasing for $x\in \left[
0,x_{n,k}^{\ast }\right] $, concluding the proof of the theorem.
\end{proof}


It is known (e.g. \cite{Shanthikumar}, p. 4) that the stochastic comparison $%
X\leq _{\text{st}}Y$ is equivalent to $Ef\left( X\right) \leq Ef\left(
Y\right) $ for all increasing functions $f$ for which the expectations exist.

Using this and the definition (\ref{operator R_n}) of the operator $R_{n}$, we can restate the above theorem as follows.

\begin{theorem}\label{R_n monotone operator}
The operator $R_{n}$ defined by (\ref{operator R_n}) is a monotone operator,
that is, if $f:\left[ 0,1\right] \rightarrow \mathbb{R}$ is monotone
increasing (decreasing), then $R_{n}(f,\cdot):\left[ 0,1\right] \rightarrow \mathbb{R%
}$ is also monotone increasing (decreasing).
\end{theorem}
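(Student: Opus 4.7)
The plan is to observe that this theorem is essentially a reformulation of Theorem \ref{Stochastic ordering of Polya rv} combined with the standard characterization of the usual stochastic order recalled just before it, namely that $X\leq_{\text{st}} Y$ is equivalent to $Ef(X)\leq Ef(Y)$ for all increasing $f$ for which the expectations exist. Since $R_n(f,x)$ is defined in (\ref{operator R_n}) as an expectation of $f$ composed with $\frac{1}{n}X_n^{x,1-x,-\min\{x,1-x\}/(n-1)}$, the monotonicity of $R_n(f,\cdot)$ should drop out by applying this equivalence to the auxiliary function $g(t)=f(t/n)$.

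Concretely, I would first fix $f:[0,1]\to\mathbb{R}$ monotone increasing, and define $g:\{0,1,\ldots,n\}\to\mathbb{R}$ by $g(k)=f(k/n)$; the function $g$ is also monotone increasing, and being defined on a finite set the relevant expectations trivially exist. For $0\leq x\leq y\leq 1$, Theorem \ref{Stochastic ordering of Polya rv} yields
\[
X_n^{x,1-x,-\min\{x,1-x\}/(n-1)}\leq_{\text{st}} X_n^{y,1-y,-\min\{y,1-y\}/(n-1)},
\]
and hence by the quoted equivalence
\[
R_n(f,x)=E g\bigl(X_n^{x,1-x,-\min\{x,1-x\}/(n-1)}\bigr)\leq E g\bigl(X_n^{y,1-y,-\min\{y,1-y\}/(n-1)}\bigr)=R_n(f,y),
\]
which is exactly the claim that $R_n(f,\cdot)$ is increasing.

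The decreasing case follows by applying the argument to $-f$, or equivalently by noting that the equivalence also yields the reversed inequality for decreasing test functions. There is essentially no obstacle here, since all the analytic difficulty has been packaged into the stochastic ordering in Theorem \ref{Stochastic ordering of Polya rv}; the only thing to be careful about is making sure the test function used in the stochastic order characterization is defined on the range of the underlying random variable rather than on $[0,1]$, which is handled by the change of variable $t\mapsto t/n$ described above.
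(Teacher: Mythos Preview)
Your proposal is correct and matches the paper's approach exactly: the paper presents this theorem as an immediate restatement of Theorem~\ref{Stochastic ordering of Polya rv} via the characterization of stochastic order by increasing test functions, without giving a separate proof. Your additional care in passing from $f$ on $[0,1]$ to $g(k)=f(k/n)$ on $\{0,\ldots,n\}$ and in noting that the expectations trivially exist is a welcome bit of precision, but the argument is the same.
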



\begin{thebibliography}{9}
\bibitem{Bustamante} J. Bustamante, \textit{Bernstein operators and their properties}, Birkh\"{a}user/Springer, Cham (2017).

\bibitem{Kotz} N. L. Johnson, S. Kotz, \textit{Urn models and their application. An approach to modern discrete probability theory}, John Wiley \& Sons, New York-London-Sydney (1977).

\bibitem{PaPaTr} M. N. Pascu, N. R. Pascu, F. Trip\c{s}a, \textit{A new Bernstein-type operator based on P\'{o}lya's urn model with negative replacement}, submitted (2017) \href{https://arxiv.org/abs/1710.08818}{Arxiv preprint}.
\bibitem{PaPaTr2} M. N. Pascu, N. R. Pascu, F. Trip\c{s}a, \textit{An improved Popoviciu-type inequality for a new Bernstein-type operator}, submitted (2018) \href{https://arxiv.org/abs/1803.09235}{Arxiv preprint}. 

\bibitem{Shanthikumar} M. Shaked, J. G. Shanthikumar, \textit{Stochastic orders}, Springer, New York (2007).
\end{thebibliography}
\end{document}